\documentclass[12pt]{msml2020}
\usepackage{times}
\usepackage{array}
\usepackage[capitalize,nameinlink]{cleveref} \crefname{equation}{}{Equations}
\Crefname{equation}{Eq.}{Equations}

\graphicspath{{images/}}

\newcommand\bbR{\mathbb{R}}

\newcommand\bbN{\mathbb{N}}

\newcommand{\tu}{\tilde{u}}
\newcommand{\tm}{\tilde{m}}
\newcommand\unit[1]{\widehat{#1}}
\newcommand\mU{\mathcal{U}}

\newcommand\dd{\,\mathrm{d}}     
\newcommand\sffont[1]{{\sf{#1}}}
\newcommand\relu{{\sffont{ReLU}}}
\newcommand\id{{\sffont{id}}}
\newcommand\Convone{{\sffont{Conv1d }}}
\newcommand\Convtwo{{\sffont{Conv2d }}}
\newcommand\BCR{{\text{\sffont{BCR-Net}}}}
\newcommand\NN{\mathrm{NN}}
\newcommand\T{\mathsf{T}}
\newcommand\C{\mathcal{C}}
\newcommand\cnn{\mathrm{cnn}}

\title[DL for Traveltime Tomography]{Solving Traveltime Tomography with Deep Learning}

\msmlauthor{ \Name{Yuwei Fan} \Email{ywfan@stanford.edu}\\
 \addr Department of Mathematics, Stanford University, Stanford, CA 94305. 
 \AND
 \Name{Lexing Ying} \Email{lexing@stanford.edu}\\
 \addr Department of Mathematics and ICME, Stanford University, Stanford, CA 94305.
}

\begin{document}
\maketitle
\begin{abstract}  This paper introduces a neural network approach for solving two-dimensional traveltime tomography
  (TT) problems based on the eikonal equation. The mathematical problem of TT is to recover the
  slowness field of a medium based on the boundary measurement of the traveltimes of waves going
  through the medium. This inverse map is high-dimensional and nonlinear. For the circular
  tomography geometry, a perturbative analysis shows that the forward map can be approximated by a
  vectorized convolution operator in the angular direction. Motivated by this and filtered
  back-projection, we propose an effective neural network architecture for the inverse map using the
  recently proposed BCR-Net, with weights learned from training datasets. Numerical results
  demonstrate the efficiency of the proposed neural networks.
\end{abstract}

\begin{keywords}  Traveltime tomography; Eikonal equation; Inverse problem; 
  Neural networks; Convolutional neural network.
\end{keywords}

\section{Introduction}\label{sec:intro}
Traveltime tomography is a method to determinate the internal properties of a medium by measuring
the traveltimes of waves going through the medium. It is first motivated in global seismology in
determining the inner structure of the Earth by measuring at different seismic stations the
traveltimes of seismic waves produced by earthquakes
\citep{backus1968resolving,rawlinson2010seismic}. By now, it has found many applications, such as
Sun's interior \citep{kosovichev1996tomographic}, ocean acoustics \citep{munk2009ocean}, and
ultrasound tomography \citep{schomberg1978improved, jin2006thermoacoustic} in biomedical imaging.

\paragraph{Background.}
The governing equation of first-arrival traveltime tomography (TT) is the eikonal equation
\citep{born1965principles} and we consider the two dimensional case in this paper for
simplicity. Let $\Omega\subset\bbR^2$ be an open bounded domain with Lipschitz boundary
$\Gamma=\partial\Omega$. Suppose that the positive function $m(x)$ is the slowness field, i.e., the
reciprocal of the velocity field, defined in $\Omega$. The traveltime $u(x)$ satisfies the eikonal
equation $|\nabla u(x)| = m(x)$.
Since it is a special case of the Hamilton-Jacobi equation, the solution $u(x)$ can develop
singularities and should be understood in the viscosity sense \citep{ishii1987simple}.

A typical experimental setup of TT is as follows. For each point $x_s\in\Gamma$, one sets up the
Soner boundary condition at point $x_s$, i.e., only zero value at $x_s$, and solves for the
following the eikonal equation
\begin{equation}\label{eq:eikonal}
  \begin{aligned}
    |\nabla u^s(x)| &= m(x), \quad x\in\Omega, \\
    u^s(x_s) &= 0,
  \end{aligned}
\end{equation}
where the superscript $s$ is to index the source point. Recording the solution of $u^s(\cdot)$ at
points $x_r\in\Gamma$ produces the whole data set $u^s(x_r)$. In practice $x_r$ and $x_s$ are
samples from a discrete set of points on $\Gamma$. Here we assume for now that they are placed
everywhere on $\Gamma$, for the simplicity of presentation and mathematical analysis.

The forward problem is to compute $u^s(x_r)$ given the slowness field $m(x)$. On the other hand, the
inverse problem, at the center of the first-arrival TT, is to recover $m(x)$ given $u^s(x_r)$.

Both the forward and inverse problems are computationally challenging, and a lot of efforts have been
devoted to their numerical solutions. For the forward problem, the eikonal equation, as a special
case of the Hamilton-Jacobi equation, can develop singular solutions. In order to compute the
physically meaningful viscosity solution, special care such as up-winding is required. As the
resulting discrete system is nonlinear, fast iteration methods such as fast marching method
\citep{popovici1997three,sethian1999fast} and fast sweeping method
\citep{zhao2005fast,kao2005fast,qian2007fast} have been developed. Among them, the fast sweeping
methods have been successfully applied to many traveltime tomography problems
\citep{leung2006adjoint}. The inverse problem is often computationally more intensive, due to the
nonlinearity of the problem. Typical methods take an optimization approach with proper
regularization \citep{chung2011adaptive} and require a significant number of iterations.

\paragraph{A deep learning approach.}
Over the past decade or so, deep learning (DL) has become the dominant approach in computer vision,
image processing, speech recognition, and many other applications in machine learning and data
science
\citep{Hinton2012,Krizhevsky2012,goodfellow2016deep,MaSheridan2015,Leung2014,SutskeverNIPS2014,leCunn2015,SCHMIDHUBER2015}.
From a technical point of view, this success is a synergy of several key developments: neural
networks (NNs) as a flexible framework for representing high-dimensional functions and maps, simple
algorithms such as back-propagation (BP) and stochastic gradient descent (SGD) for tuning the model
parameters, efficient general software packages such as Tensorflow and Pytorch, and unprecedented
computing power provided by GPUs and TPUs.

In the past several years, deep neural networks (DNNs) have been increasingly used in scientific
computing, particularly in solving PDE-related problems
\citep{khoo2017solving,berg2017unified,han2018solving,fan2018mnn,Araya-Polo2018,Raissi2018,kutyniok2019theoretical,feliu2019meta},
in two directions. In the first direction, as NNs offer a powerful tool for approximating
high-dimensional functions \citep{cybenko1989approximation}, it is natural to use them as an ansatz
for high-dimensional PDEs
\citep{rudd2015constrained,carleo2017solving,han2018solving,khoo2019committor, weinan2018deep}. The
second direction focuses on the low-dimensional parameterized PDE problems, by using the DNNs to
represent the nonlinear map from the high-dimensional parameters of the PDE solution
\citep{long2018pde,han2017deep,khoo2017solving,fan2018mnn,fan2018mnnh2,fan2019bcr,li2019variational,bar2019unsupervised}.

As an extension of the second direction, DNNs have been widely applied to inverse problems
\citep{khoo2018switchnet,hoole1993artificial,kabir2008neural,adler2017solving,lucas2018using,tan2018image,fan2019eit,fan2019ot,raissi2019physics}. For
the forward problem, since applying neural networks to input data can be carried out rapidly under
current software and hardware architectures, the solution of the forward problem can be
significantly accelerated when the forward map is represented with a DNN. For the inverse problem,
DNNs can help in two critical ways: (1) due to its flexibility in representing high-dimensional
functions, DNNs can potentially be used to approximate the full inverse map, thus avoiding the
iterative solution process; (2) recent work in machine learning shows that DNNs often can
automatically extract features from the data and offer a data-driven regularization prior.

This paper applies the deep learning approach to the first-arrival TT by representing the whole
inverse map using a NN. The starting point is a perturbative analysis of the forward map, which
reveals that for the circular tomography geometry, the forward map contains a one-dimensional
convolution with multiple channels, after appropriate reparameterization. This observation motivates
to represent the forward map from 2D coefficient $m(x)$ to 2d data $u^s(x_r)$ by a
\emph{one-dimensional} convolution neural network (with multiple channels).  Further, the
one-dimensional convolution neural network can be implemented by the recently proposed multiscale
neural network \citep{fan2018mnn,fan2019bcr}.  Following the idea of filtered back-projection
\citep{schuster1993wavepath}, the inverse map can be approximated by the adjoint map followed by a
pseudo-differential filtering step. This suggests an architecture for the inverse map by reversing
the architecture of the forward map followed with a simple two-dimensional convolution neural
network.

For the test problems being considered, the resulting neural networks have $10^5$ parameters
when the data is of size $160\times 160$ (a fully-connected layer results in $160^4\approx 6\times
10^8$ parameters), thanks to the convolutional structure and the compact multiscale neural network. This rather small
number of parameters allows for rapid and accurate training, even on rather limited data sets.

\paragraph{Organization.}
This rest of the paper is organized as follows. The mathematical background is given in
\cref{sec:math}. The design and architecture of the DNNs of the forward and inverse maps are
discussed in \cref{sec:nn}. The numerical results in \cref{sec:num} demonstrate the numerical
efficiency and the generalization of the proposed neural networks.

\section{Mathematical analysis of traveltime tomography}\label{sec:math}

\subsection{Problem setup}

This section describes the necessary mathematical insights that motivate the NN architecture design.
Let us consider the so-called differential imaging setting, where a background slowness field
$m_0(x)$ is known, and denote by $u_0^s$ the solution of the eikonal equation associated with the
field $m_0$:
\begin{equation}\label{eq:bg}
  \begin{aligned}
    |\nabla u_0^s(x)| &= m_0(x), \quad x\in\Omega, \\
    u_0^s(x_s) &= 0.
  \end{aligned}
\end{equation}
Then for a perturbation $\tm$ to the slowness field, the difference in the traveltime $\tu^s \equiv
u^s - u^s_0$ naturally satisfies
\begin{equation}\label{eq:perturbation}
  \begin{aligned}
    |\nabla (u_0^s(x) + \tu^s(x))| &= m_0(x) + \tm(x), \quad x\in\Omega, \\
    \tu^s(x_s) &= 0.
  \end{aligned}
\end{equation}
The imaging data $d(x_s,x_r)$ consists of $\tu^s(x_r)$ over all $x_s$ and $x_r$: $d(x_s,x_r) \equiv
\tu^s(x_r)$.

To better understand the dependence of $\tu^s$ on $\tm$, we assume $\tm$ to be sufficient small and
carry out a perturbative analysis. Squaring \cref{eq:perturbation} and canceling the background
using \cref{eq:bg} result in 
\begin{equation}
  (\nabla \tu^s(x))^\T\nabla \tu^s(x) + 2(\nabla u_0^s(x))^\T\nabla \tu^s(x)
  = \tm(x)^2 + 2 m_0(x)\tm(x).
  \end{equation}
Since $\tm(x)$ is sufficiently small, $\nabla \tu^s(x)$ is also a small quantity. Keeping only
linear terms in $\tm$ and discarding the higher order ones yields
\begin{equation}
  \nabla u_0^s(x)^\T \nabla \tu^s(x) \approx m_0(x) \tm(x),
\end{equation}
which is an advection equation. Using $|\nabla u_0^s(x)^\T|=m_0(x)$, one can further simplify the
upper equation as
\begin{equation}\label{eq:approximation}
  \unit{\nabla u_0^s(x)}^\T \nabla \tu^s(x) \approx \tm(x),
\end{equation}
where $\unit{\cdot}$ stands for the unit vector.

For simplicity, let $\C_0(x_s,x_r)$ be the unique characteristic of $u_0^s(x_r)$ that connects $x_s$
and $x_r$. Then
\begin{equation}\label{eq:d1}
  d(x_s,x_r) \equiv \tu^s(x_r) \approx \int_{\C_0(x_s,x_r)} \tm(x) \dd x \equiv d_1(x_s,x_r),
\end{equation}
where $d_1(x_s,x_r)$ is introduced to stand for the first-order approximation to $d(x_s,x_r)$.
Particularly, if the background slowness field is a constant, then $\C_0(x_s, x_r)$ is a line
segment with start and end points to be $x_s$ and $x_r$, respectively, and
\begin{equation*}
  d_1(x_s, x_r) = |x_s-x_r|\int_{0}^1 \tm(x_s+\tau(x_r-x_s))\dd\tau.
\end{equation*}

\begin{figure}[htb]
  \centering
  \includegraphics[width=0.30\textwidth]{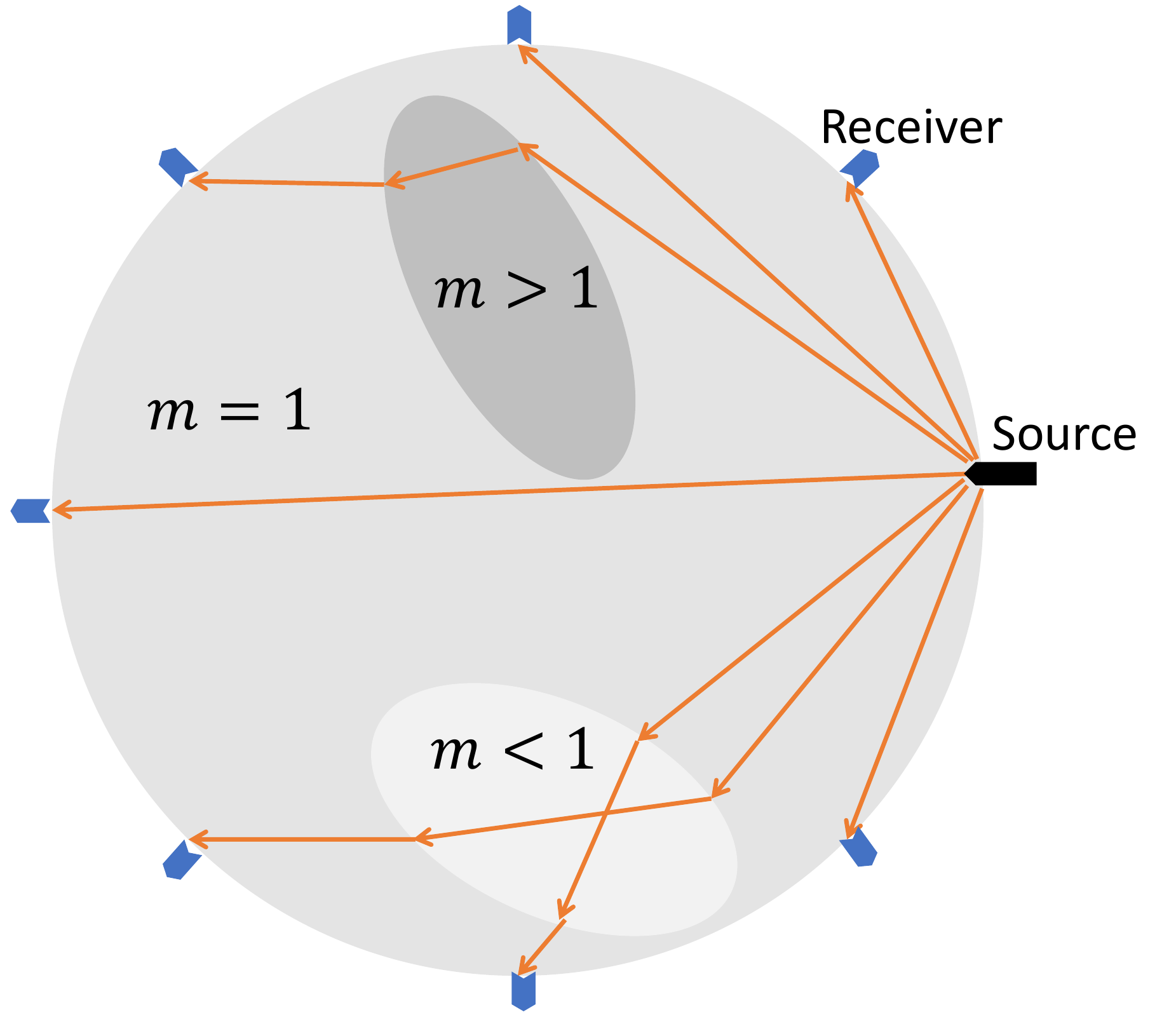}
  \caption{\label{fig:domain} Illustration of the problem setup. The domain is a unit disk and the
  light sources and the receivers are equidistantly placed on the boundary.}
\end{figure}

The most relevant geometry in traveltime tomography either for medicine and earth science is the
circular geometry where $\Omega$ is modeled as a unit disk
\citep{chung2011adaptive,deckelnick2011numerical,yeung2018numerical}. As illustrated in
\cref{fig:domain}, the sources and receivers are placed on the boundary equidistantly. More
precisely, $x_s=(\cos(s),\sin(s))$ with $s=\frac{2\pi k}{N_s}$, $k=0,\dots,N_s-1$ and
$x_r=(\cos(r),\sin(r))$ with $r=\frac{2\pi j}{N_r}$, $j=0,\dots, N_r-1$, where $N_s=N_r$ in the
current setup.

Often in many cases, the background slowness field $m_0(x)$ is only radially dependent, or even a
constant \citep{deckelnick2011numerical, yeung2018numerical}. In what follows, $m_0(x)$ is assumed
to be radially dependent, i.e., $m_0(x) = m_0(|x|)$.

\subsection{Mathematical analysis on the forward map}\label{sec:analysis}

Since the domain $\Omega$ is a disk, it is convenient to rewrite the problem in the polar
coordinates.  Let $x_r=(\cos(r), \sin(r))$, $x_s=(\cos(s), \sin(s))$ and
$x=(\rho\cos(\theta),\rho\sin(\theta))$, where $\rho\in[0,1]$ is the radial coordinate and
$r,s,\theta\in[0, 2\pi)$ are the angular ones..

\begin{figure}[ht]
  \centering
  \begin{tabular}{c@{}c@{}c}
    \subfigure[$\tm(x)$]{
      \includegraphics[width=0.2\textwidth]{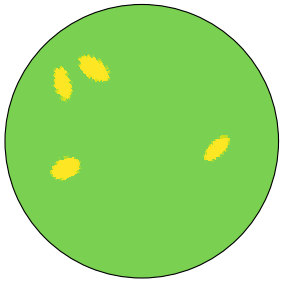} 
    } &
    \subfigure[$u^s(x_r)$]{
      \includegraphics[width=0.3\textwidth]{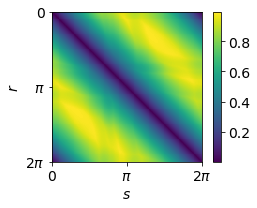} 
    } &
    \subfigure[$d(x_s, x_r)$]{
      \includegraphics[width=0.32\textwidth]{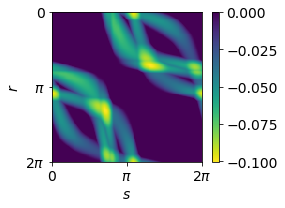}
    }\\
    \subfigure[$\tm(\theta, \rho)$]{
      \includegraphics[width=0.3\textwidth]{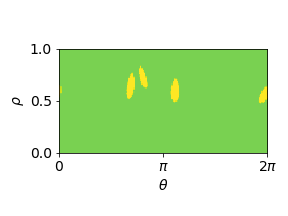} 
    } &
    \subfigure[$u^s(x_{s+h})$]{
      \includegraphics[width=0.3\textwidth]{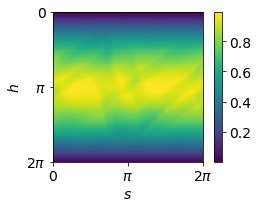}
    } &
    \subfigure[$d(s, h)$]{
      \includegraphics[width=0.32\textwidth]{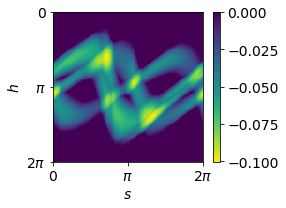}
    }
  \end{tabular}
  \caption{\label{fig:measurement} Visualization of the slowness field and the measurement data.
    The upper figures are the perturbation of the slowness field $\tm(x)$ ($m_0=1$ and $\tm\leq0$ in
    this sample), the measurement data $u^s(x_r)$ and the difference $d(x_s, x_r)$ with respect to
    the background measurement data.  The lower-left figure is $\tm(x)$ in the polar coordinates and
    the lower-right two figures the ``shear'' of their corresponding upper figures.
  }
\end{figure}

\Cref{fig:measurement} presents an example of the slowness field and the measurement data. Notice
that the main signal in $u^s(x_r)$ and $d(x_s, x_r)$ concentrates on the minor diagonal part.  Due
to the circular tomography geometry, it is convenient to ``shear'' the measurement data by
introducing a new angular variable $h = r - s$, where the difference here is understood modulus
$2\pi$. As we shall see in the next section, this shearing step significantly simplifies the
architecture of the NNs. Under the new parameterization, the measurement data is
\begin{equation}
    d(s, h) \equiv d(x_s, x_{s+h}).
\end{equation}
The same convention applies to its first order approximation: $d_1(s, h) \equiv
d_1(x_s,x_{s+h})$. By writing $\tm(\theta, \rho) \equiv \tm( \rho\cos(\theta), \rho\sin(\theta) )$
in the polar coordinates, the linear dependence of $d_1(s,h)$ on $\tm$ in \eqref{eq:d1} states that
there exists a kernel distribution $K(s, h, \theta, \rho)$ such that
\begin{equation}\label{eq:d1kernel}
  d_1(s, h) = \int_{0}^1\int_{0}^{2\pi}K(s, h, \theta, \rho) \tm(\theta, \rho)\dd\rho\dd\theta.
\end{equation}

\paragraph{Convolution form of the map $m(\theta,\rho)\to d_1(s,h)$.}
Since the domain is a disk and $m_0$ is only radially independent, the whole problem is
equivariant to rotation. In this case, the situation can be dramatically simplified. Precisely, 
we have the following proposition.
\begin{proposition}\label{pro:convolution}
  There exists a function $\kappa(h, \rho, \cdot)$ periodic in the last parameter such that
  \begin{equation}\label{eq:convolution}
    d_1(s, h) = \int_0^1 \int_0^{2\pi} \kappa(h,\rho,s-\theta) \tm(\theta,\rho) \dd\rho\dd\theta.
  \end{equation}
\end{proposition}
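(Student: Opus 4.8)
The plan is to derive \cref{eq:convolution} from \cref{eq:d1kernel} by exploiting the rotational equivariance of the whole set-up: because $\Omega$ is a disk and the background slowness $m_0$ is radial, rotating the medium, the source, and the receiver all by a common angle leaves the forward map $\tm\mapsto d_1$ unchanged in form, and this forces the kernel $K$ to depend on the source angle $s$ and the medium angle $\theta$ only through the difference $s-\theta$.

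Concretely, I would proceed in three steps. \emph{(i) Equivariance of the characteristics.} Let $R_\alpha$ denote the rotation of $\bbR^2$ by angle $\alpha$. Since $m_0\circ R_\alpha=m_0$, uniqueness of the viscosity solution of \cref{eq:bg} gives $u_0^{R_\alpha x_s}(R_\alpha x)=u_0^{x_s}(x)$, hence the characteristics satisfy $R_\alpha\,\C_0(x_s,x_r)=\C_0(R_\alpha x_s,R_\alpha x_r)$; in the angular parameterization, $R_\alpha$ sends $s\mapsto s+\alpha$ and $r\mapsto r+\alpha$, so $h=r-s$ is unaffected. \emph{(ii) Equivariance of the data.} Writing $\tm^\alpha(\theta,\rho):=\tm(\theta-\alpha,\rho)$ for the medium rotated by $\alpha$, \cref{eq:d1} together with the rotation invariance of arc length gives $d_1^\alpha(s,h)=d_1(s-\alpha,h)$. \emph{(iii) Transfer to the kernel.} Substituting $\tm^\alpha$ into \cref{eq:d1kernel} and changing the integration variable $\theta\mapsto\theta+\alpha$ (allowed since everything is $2\pi$-periodic in $\theta$) gives $d_1^\alpha(s,h)=\int_0^1\int_0^{2\pi}K(s,h,\theta+\alpha,\rho)\tm(\theta,\rho)\dd\rho\dd\theta$; comparing with $d_1(s-\alpha,h)=\int_0^1\int_0^{2\pi}K(s-\alpha,h,\theta,\rho)\tm(\theta,\rho)\dd\rho\dd\theta$ and letting $\tm$ range over all admissible perturbations yields the functional identity $K(s-\alpha,h,\theta,\rho)=K(s,h,\theta+\alpha,\rho)$ for every $\alpha$. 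Taking $\alpha=-\theta$ gives $K(s,h,\theta,\rho)=K(s-\theta,h,0,\rho)$, so $\kappa(h,\rho,t):=K(t,h,0,\rho)$ satisfies \cref{eq:convolution}, and $\kappa(h,\rho,\cdot)$ inherits $2\pi$-periodicity from the periodicity of $K$ in its first slot.

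The main obstacle is that $K$ is not an honest function but a distribution — by \cref{eq:d1} it is the arc-length measure on $\C_0(x_s,x_{s+h})$ transported to polar coordinates, hence carrying the Cartesian-to-polar Jacobian — so steps (ii)–(iii) must be read as identities of distributions tested against $\tm$, and the change of variables and the ``arbitrary $\tm$'' cancellation have to be justified at that level; one also needs enough regularity and uniqueness of $\C_0$ for the rotation identity in (i) to hold pointwise in $(s,h)$. A cleaner, if less slick, alternative that partly avoids these subtleties is to compute $\kappa$ explicitly: parameterizing $\C_0(x_0,x_h)$ with $x_0=(1,0)$ in polar coordinates as $\theta=\Phi(h,\rho)$ with arc-length density $J(h,\rho)=\sqrt{1+\rho^2\,\partial_\rho\Phi(h,\rho)^2}$, and using $\C_0(x_s,x_{s+h})=R_s\,\C_0(x_0,x_h)$ from step (i), one finds $d_1(s,h)=\int_0^1 J(h,\rho)\,\tm\bigl(s+\Phi(h,\rho),\rho\bigr)\dd\rho$, i.e.\ $\kappa(h,\rho,t)=J(h,\rho)\,\delta\bigl(t+\Phi(h,\rho)\bigr)$ on the circle, which manifestly depends on $(s,\theta)$ only through $s-\theta$ and is $2\pi$-periodic in that argument; the only remaining care is to sum over the finitely many branches of $\rho\mapsto\Phi(h,\rho)$ where the characteristic fails to be radially monotone.
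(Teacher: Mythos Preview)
Your proposal is correct and follows essentially the same approach as the paper: both exploit the rotational equivariance of the characteristics $\C_0$ (since $m_0$ is radial) to derive the kernel identity $K(s+\phi,h,\theta+\phi,\rho)=K(s,h,\theta,\rho)$, from which the convolution form follows by setting $\kappa(h,\rho,t)=K(t,h,0,\rho)$. The paper carries this out concretely by parameterizing $\C_0(s,r)$ as $(\theta_{s,h}(\tau),\rho_{s,h}(\tau))$ and reading off the identity directly from the line integral, whereas you route the same argument through the rotated medium $\tm^\alpha$ and an ``arbitrary $\tm$'' cancellation---your added care about the distributional nature of $K$ and the explicit $\delta$-form of $\kappa$ are refinements the paper does not spell out.
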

\begin{proof}
Let $\C_0(s, r) \equiv \C_0((\cos(s),\sin(s)), (\cos(r),\sin(r)))$ and we parameterize the
characteristic $\C_0(s, r)$ as $p_{s,h}(\tau) \equiv (\theta_{s,h}(\tau), \rho_{s,h}(\tau))$,
$\tau\in[0,1]$ with $\rho_{s,h}(0)=\rho_{s,h}(1)=1$ and $\theta_{s,h}(0)=s$, $\theta_{s,h}(1)=r$.
Then the relationship \eqref{eq:d1} between $d_1$ and $\tm$ can be written as 
\[
  d_1(s, h) = \int_0^1\tm(\theta_{s,h}(\tau),\rho_{s,h}(\tau))\|p'_{s,h}(\tau)\|\dd\tau.
\]

Since the background slowness $m_0$ is radially independent, the characteristic $\C_0(s, r)$
is rotation invariant in the sense that for any $\phi\in[0,2\pi)$, if
$(\theta_{s,h}(\tau),\rho_{s,h}(\tau))$ is a parameterization of $\C_0(s,r)$, then
$(\theta_{s,h}(\tau)+\phi, \rho_{s,h}(\tau))$ is a parameterization of $\C_0(s+\phi,r+\phi)$.
Hence, for any $\phi\in[0,2\pi)$, if we rotate the system by a angular $\phi$, then 
\[
  \begin{aligned}
    d_1(s+\phi, h) &= \int_0^1\tm(\theta_{s+\phi,h}(\tau),\rho_{s+\phi,h}(\tau))\|p'_{s+\phi,h}(\tau)\|\dd\tau\\
    & = \int_0^1\tm(\theta_{s,h}(\tau)+\phi,\rho_{s,h}(\tau))\|p'_{s,h}(\tau)\|\dd\tau.
  \end{aligned}
\]
Writing this equation in the form of \eqref{eq:d1kernel} directly yields $K(s+\phi, h, \theta+\phi,
\rho) = K(s, h,\theta,\rho)$. Hence, there is a periodic $\kappa(s, h,\cdot)$ in the last parameter
such that $K(s, h,\theta,\rho)=\kappa(h, \rho, s-\theta)$. This completes the proof.
\end{proof}

Proposition \ref{pro:convolution} shows that $K$ acts on $\tm$ in the angular direction by a convolution, which
is, in fact, the motivation behind shearing the measurement data $d$.  This property allows us to
evaluate the map $\tm(\theta,\rho)\to d(s,h)$ by a family of 1D convolutions, parameterized $\rho$
and $h$.

\paragraph{Discretization.}
All the above analysis is in the continuous space. One can apply a discretization on the eikonal
equation \cref{eq:eikonal} by finite difference and solve it by fast sweeping method or fast
marching method. Here we assume that the discretization of $\tm(\theta,\rho)$ is on a uniform mesh
on $[0,2\pi)\times[0, 1]$.
More details of the discretization and the numerical solver will be discussed in the \cref{sec:num}. 
With a slight abuse of notation, we use the same letters to denote the continuous kernels, variables
and their discretization. Then the discretization version of \cref{eq:d1kernel,eq:convolution} is
\begin{equation}\label{eq:discrete}
  d(s, h) \approx \sum_{\rho,\theta}K(s, h, \theta,\rho)\tm(\theta, \rho) 
  = \sum_{\rho}(\kappa(h,\rho, \cdot) * \tm(\cdot, \rho))(s).
\end{equation}

\section{Neural networks for TT}\label{sec:nn}
In this section, we describe the NN architecture for the inverse map $d(s, h)\to \tm(\theta, \rho)$
based on the mathematical analysis in \cref{sec:math}. To start, we first study the NN for the
forward map and then the inverse map.

\paragraph{Forward map.}
The perturbative analysis in \cref{sec:analysis} shows that, when $\tm$ is sufficiently small, the
forward map $\tm(\theta,\rho)\to d(s, h)$ can be approximated by \cref{eq:discrete}. In terms of the
NN architecture, for small $\tm$, the forward map \cref{eq:discrete} can be approximated by a
(non-local) convolution on the angular direction and a fully-connected operator on the $(h, \rho)$
direction.  In the actual implementation, it can be represented by the convolution layer by taking
$h$ and $\rho$ as the channel dimensions. For larger $\tm$, this linear approximation is no longer
accurate. In order to extend the neural network for \cref{eq:discrete} to the nonlinear case, we
propose to increase the number of convolution layers and nonlinear activation functions.

\begin{algorithm}[ht]
  \SetAlgoLined
  \KwData{$c$, $N_{\cnn}\in\bbN^+$, $\tm\in\bbR^{N_{\theta}\times N_{\rho}}$}
  \KwResult{$d \in\bbR^{N_s\times N_h}$}
  \tcc{Resampling data to fit for \BCR.}
  $\xi = \Convone[c,1,\id](\tm)$ with $\rho$ as the channel direction

  \tcc{Use {\BCR} to implement the convolutional neural network.}
  $\zeta = \BCR[c, N_{\cnn}](\xi)$

  \tcc{Reconstruct the result from the output of \BCR.}
  $d = \Convone[N_{h}, 1, \id](\zeta)$
  \caption{\label{alg:forward} Neural network architecture for the forward map $\tm\to d$.}
\end{algorithm}

In the $(h, \rho)$ direction, denote the number of channels by $c$, whose value is problem-dependent
and will be discussed in the numerical part. In the angular direction, since the convolution between
$\tm$ and $d$ is global, in order to represent global interactions the window size of the
convolution $w$ must satisfy the following relationship
\begin{equation}
  w N_{\cnn}\geq N_{\theta},
\end{equation}
where $N_{\cnn}$ is the number of layers and $N_{\theta}$ is number of discretization points on the
angular direction. A simple calculation shows that the number of parameters of the neural network is
$O(wN_{\cnn}c^2)\sim O(N_{\theta}c^2)$. The recently proposed {\BCR} \citep{fan2019bcr} has been
demonstrated to require fewer number of parameters and provide better efficiency for such global
interactions. Therefore, in our architecture, we replace the convolution layers with the
{\BCR}. The resulting neural network architecture for the forward map is summarized in
\cref{alg:forward} with an estimate of $O(c^2\log(N_{\theta})N_{\cnn})$ parameters. The components
are explained in the following.
\begin{itemize}
\item $\xi=\Convone[c, w, \phi](m)$ mapping $m\in \bbR^{N_{\theta}\times N_{\rho}}$ to
  $\xi\in\bbR^{N_{\theta}\times c}$ is the one-dimensional convolution layer with window size $w$,
    channel number $c$, activation function $\phi$ and period padding on the first direction.
  \item {\BCR} is motivated by the data-sparse nonstandard wavelet representation of the
    pseudo-differential operators \citep{bcr}. It processes the information at different scale
    separately and each scale can be understood as a {\em local} convolutional neural network. The
    one-dimensional $\zeta=\BCR[c, N_{\cnn}](\xi)$ maps $\xi\in\bbR^{N_{\theta}\times c}$ to
    $\zeta\in\bbR^{N_{\theta}\times c}$ where the number of channels and layers in the local
    convolutional neural network in each scale are $c$ and $n_{\cnn}$, respectively. The readers are
    referred to \citep{fan2019bcr} for more details on the \BCR.
\end{itemize}

\paragraph{Inverse map.}
The perturbative analysis in \cref{sec:analysis} shows that if $\tm$ is sufficiently small, the
forward map can be approximated by $d \approx K \tm$, the operator notation of the discretization
\cref{eq:discrete}. Here $\tm$ is a vector indexed by $(\theta, \rho)$, $d$ is a vector indexed by
$(s, h)$, and $K$ is a matrix with row indexed by $(s, h)$ and column indexed by $(\theta, \rho)$.

The filtered back-projection method \citep{schuster1993wavepath} suggests the following formula to
recover $\tm$:
\begin{equation}
  \tm\approx (K^\T K + \epsilon I)^{-1} K^\T d,
\end{equation}
where $\epsilon$ is a regularization parameter. The first piece $K^\T d$ can also be written as a
family of convolutions
\begin{equation}
  (K^\T d)(\theta,\rho) = \sum_{h} (\kappa(h, \rho, \cdot) * d(\cdot, h))(\theta).
\end{equation}
The application of $K^\T$ to $d$ can be approximated with a similar neural network to $K$ in
\cref{alg:forward}. The second piece $(K^\T K + \epsilon I)^{-1}$ is a pseudo-differential operator
in the $(\theta, \rho)$ space and it is implemented with several two-dimensional convolutional
layers for simplicity.  Then the resulting architecture for the inverse map is summarized in
\cref{alg:inverse} and illustrated in \cref{fig:inverse}. The $\Convtwo[c_2, w, \phi]$ used in
\cref{alg:inverse} is the two-dimensional convolution layer with window size $w$, channel number
$c_2$, activation function $\phi$ and periodic padding on the first direction and zero padding on
the second direction. The selection of the hyper-parameters in \cref{alg:inverse} will be discussed
in \cref{sec:num}.

\begin{algorithm}[htb]
  \SetAlgoLined
  \KwData{$c, c_2$, $w$, $N_{\cnn}$, $N_{\cnn2}\in\bbN^+$, $d\in\bbR^{N_{s}\times N_h}$}
  \KwResult{$\tm \in\bbR^{N_{\theta}\times N_{\rho}}$}
  \tcc{Application of $K^\T$ to $d$}
  $\zeta = \Convone[c, 1, \id](d)$ with $h$ as the channel direction

  $\xi = \BCR[c, N_{\cnn}](\zeta)$

  $\xi^{(0)} = \Convone[N_\rho, 1, \id](\xi)$

  \tcc{Application of $(K^\T K+\epsilon I)^{-1}$}
  \For{$k$ from $1$ to $N_{\cnn2}-1$}{
    $\xi^{(k)} = \Convtwo[c_2, w, \relu](\xi^{(k-1)})$
  }
  $\tm = \Convtwo[1, w, \id](\xi^{(N_{\cnn2}-1)})$
  \caption{\label{alg:inverse} Neural network architecture for the inverse problem $d \to \tm$.}
\end{algorithm}

\begin{figure}[ht]
  \centering
  \includegraphics[width=0.9\textwidth]{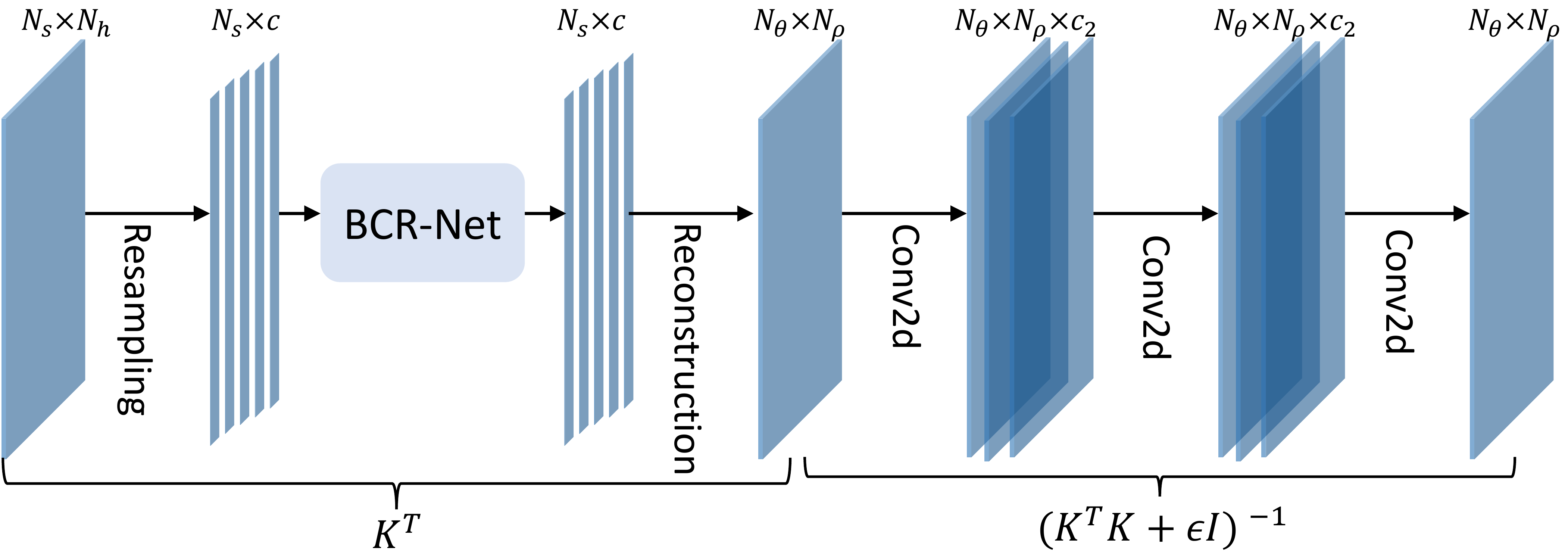}
  \caption{ \label{fig:inverse} Neural network architecture for the inverse map of TT.}
\end{figure}

\section{Numerical tests} \label{sec:num}
This section reports the numerical performance of the proposed neural network architecture in
\cref{alg:inverse} for the inverse map $d\to \tm$.

\subsection{Experimental setup}

In order to solve the eikonal equation \cref{eq:eikonal} on the unit disk $\Omega$, we embed
$\Omega$ into the square domain $[-1,1]^2$ by specifying sufficiently large slowness values
outside $\Omega$.  The domain $[-1,1]^2$ is discretized with a uniform Cartesian mesh with $160$
points in each direction by a finite difference scheme. The fast sweeping method proposed in
\citep{zhao2005fast} is used to solve the nonlinear discrete system. In the polar coordinates, the
domain $(\theta, \rho) \in [0,2\pi]\times [0,1]$ is partitioned by uniformly Cartesian mesh with
$160\times 80$ points, i.e., $N_\theta=160$ and $N_{\rho}=80$. As $\tm(\theta,\rho)$ used in
\cref{alg:inverse} is in the polar coordinates while the eikonal equation is solved in the Cartesian
ones, the perturbation of the slowness field $\tm$ is treated as a piecewise linear function in the
domain $\Omega$ and is interpolated on to the polar grid. The number of sources and receivers as
$N_s=N_r=160$, and hence $N_h=160$.

The NN in \cref{alg:inverse} is implemented with Keras \citep{keras} running on top of TensorFlow
\citep{tensorflow}. All the parameters of the network are initialized by Xavier initialization
\citep{glorot2010understanding}.  The loss function is the mean squared error, and the optimizer is
the Nadam \citep{dozat2015incorporating}.  In the training process, the batch size and the learning
rate is firstly set as $32$ and $10^{-3}$ respectively, and the NN is trained $100$ epochs. One then
increases the batch size by a factor $2$ till $512$ with the learning rate unchanged, and then
decreases the learning rate by a factor $10^{1/2}$ to $10^{-5}$ with the batch size fixed as
$512$. In each step, the NN is trained with $50$ epochs. For the hyper-parameters used in
\cref{alg:inverse}, $N_{\cnn}=6$, $N_{\cnn2}=5$, and $w=3\times 3$. The selection of the channel
number $c$ will be studied later.

\subsection{Results}
For a fixed $\tm$, $d(s, h)$ stands for the \emph{exact} measurement data solved by numerical
discretization of \cref{eq:eikonal}. The prediction of the NN from $d$ is denoted by $\tm^{\NN}$.
The metric for the prediction is the peak signal-to-noise ratio (PSNR), which is defined as
\begin{equation}\label{eq:psnr}
  \mathrm{PSNR} = 
  10 \log_{10}\left(\frac{\mathrm{Max}^2}{\mathrm{MSE}}\right),
  ~~\mathrm{Max} = \max_{ij}(\tm_{ij}) - \min_{ij}(\tm_{ij}),
  ~~\mathrm{MSE} = \frac{1}{N_{\theta}N_{\rho}}\sum_{i,j}|\tm_{i,j}-\tm_{i,j}^{\NN}|^2.
\end{equation}
For each experiment, the test PSNR is then obtained by averaging \cref{eq:psnr} over a given set of
test samples. The numerical results presented below are obtained by repeating the training process
five times, using different random seeds for the NN initialization.

The numerical experiments focus on the shape reconstruction setting
\citep{ustundag2008retrieving,deckelnick2011numerical}, where $\tm$ are often piecewise constant
inclusions. The background slowness field is set as $m_0\equiv 1$ and the slowness field $\tm$ is
assumed to be the sum of $N_e$ piecewise constant ellipses. As the slowness field $m$ is positive,
it is required that $\tm>-1$. For each ellipse, the direction is uniformly random
over the unit circle, the position is uniformly sampled in the disk, and the width and height
depend on the datasets.
It is also required that each ellipse lies in the disk and there is no intersection between every two
ellipses.  Three types of data sets are generated to test the neural network.
\begin{itemize}
  \item Negative inclusions. $\tm$, the perturbation of the slowness, is $-0.5$ in the ellipses and
    $0$ otherwise, and the width and height of each ellipse are sampled from the uniform
    distributions $\mU(0.1, 0.2)$ and $\mU(0.05, 0.1)$, respectively.
  \item Positive inclusions. $\tm$ is $2$ in the ellipses and $0$ otherwise, and 
      the width and height of each ellipse are sampled from $\mU(0.2,0.4)$ and $\mU(0.1, 0.2)$,
      respectively.
  \item Mixture inclusions. The setup of each ellipse is either a negative one in the negative
      inclusions or a positive one in the positive inclusions.
      \end{itemize}
For each type, we generate two datasets for $N_e=2$ and $4$.
For each test, $20,480$ samples $\{(\tm_i, d_i)\}$ are generated with $16,384$ used for training and
the remaining $4,096$ for testing.

\begin{figure}[h!]
  \centering
  \subfigure[Negative inclusions]{
    \includegraphics[width=0.3\textwidth]{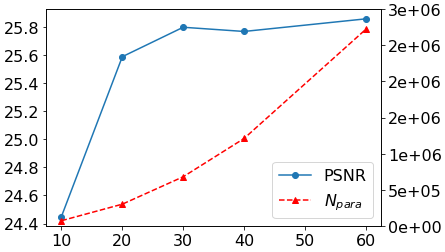}
  }
  \subfigure[Positive inclusions]{
    \includegraphics[width=0.3\textwidth]{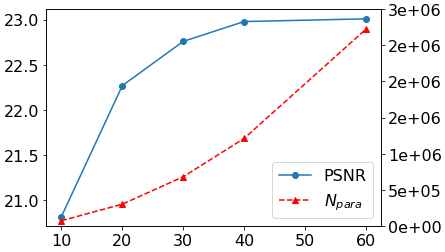}
  }
  \subfigure[Mixture inclusions]{
    \includegraphics[width=0.3\textwidth]{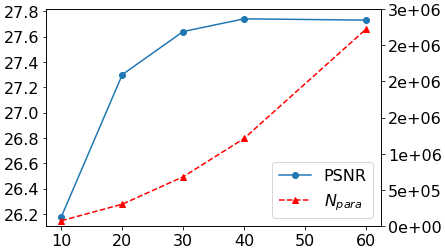}
  }
  \caption{\label{fig:trErr} The test PSNR for different channel numbers $c$ for the three types of
    data with $N_e=4$.}
\end{figure}

\begin{figure}[h!]
  \centering
  \begin{tabular}{l@{}cccc}
    & reference & $\tm^{\NN}$ with $\delta=0$ &
    $\tm^{\NN}$ with $\delta=2\%$ &
    $\tm^{\NN}$ with $\delta=10\%$\\
    \rotatebox{90}{\phantom{abcdefgh}Neg}\phantom{ab} &
    \includegraphics[width=0.22\textwidth]{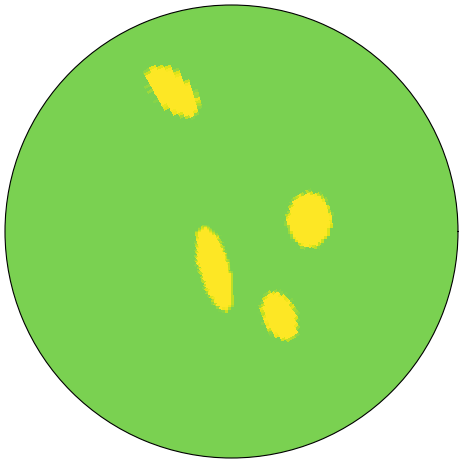}     &
    \includegraphics[width=0.22\textwidth]{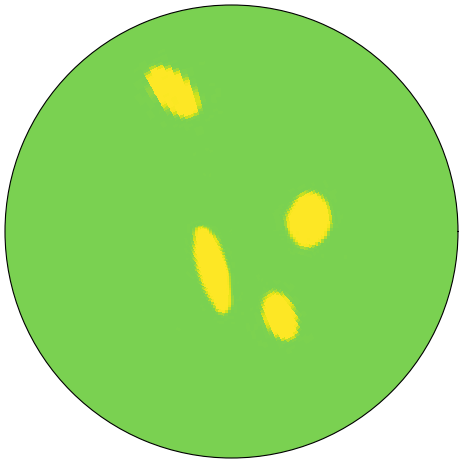} &
    \includegraphics[width=0.22\textwidth]{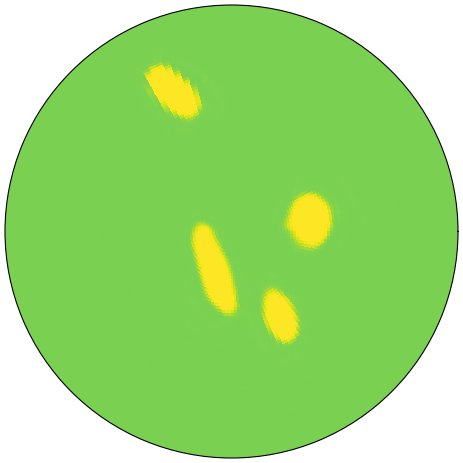} &
    \includegraphics[width=0.22\textwidth]{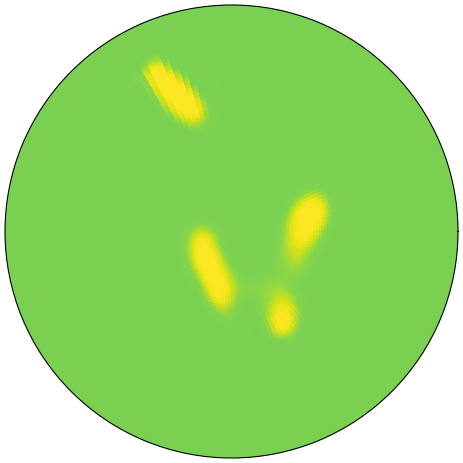} \\
    \rotatebox{90}{\phantom{abcdefgh}Pos}\phantom{ab} &
    \includegraphics[width=0.22\textwidth]{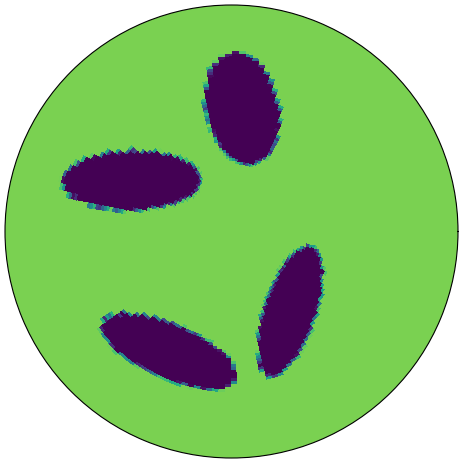} &
    \includegraphics[width=0.22\textwidth]{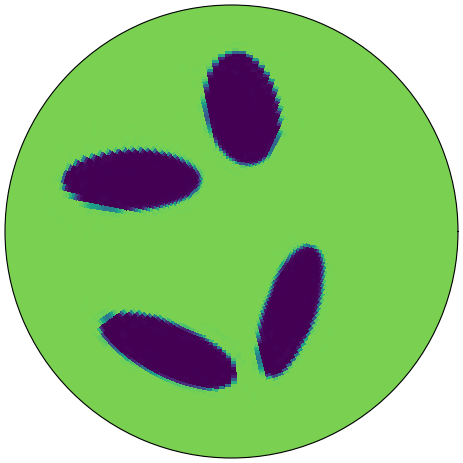} &
    \includegraphics[width=0.22\textwidth]{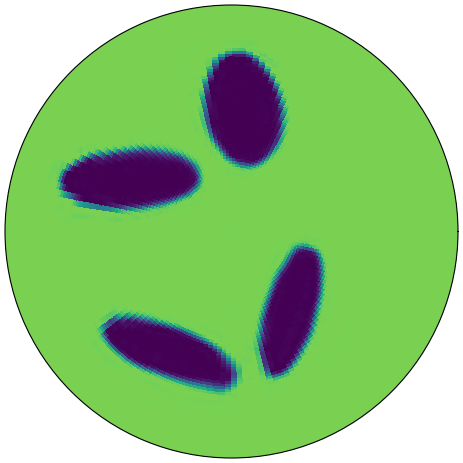} &
    \includegraphics[width=0.22\textwidth]{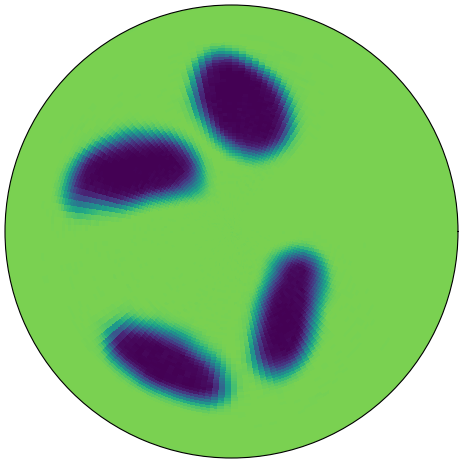} \\
    \rotatebox{90}{\phantom{abcdefgh}Mix}\phantom{ab} &
    \includegraphics[width=0.22\textwidth]{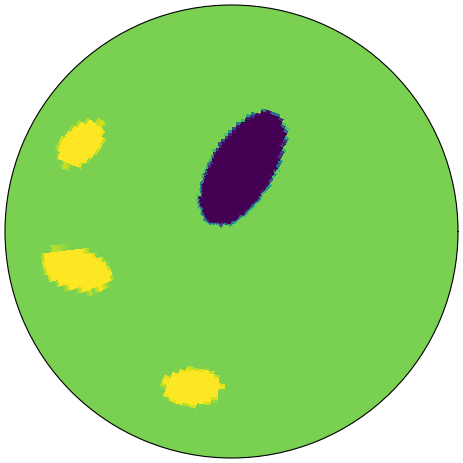} &
    \includegraphics[width=0.22\textwidth]{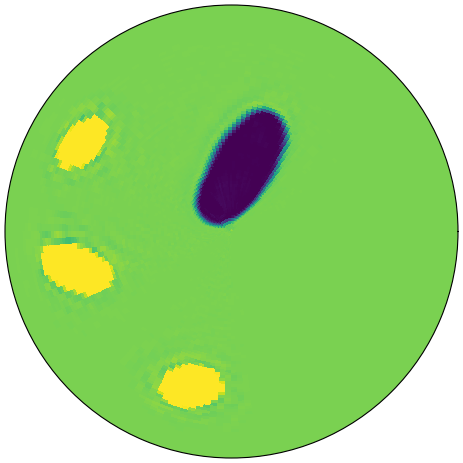} &
    \includegraphics[width=0.22\textwidth]{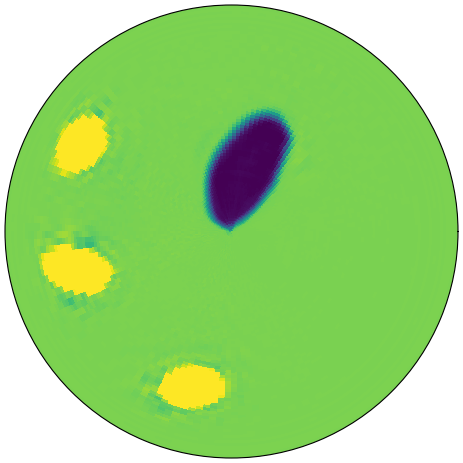} &
    \includegraphics[width=0.22\textwidth]{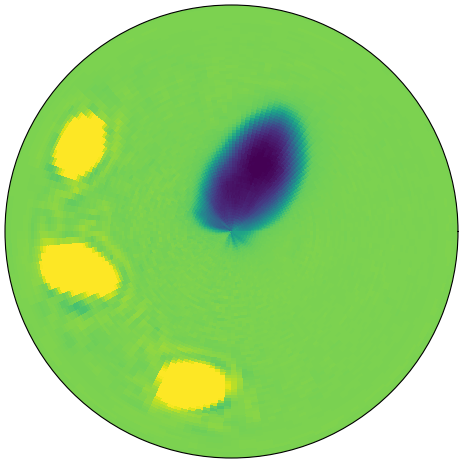}
  \end{tabular}
  \caption{\label{fig:prediction}NN prediction of a sample in the test data for negative (first row)
    / positive (second row) / mixture (third row) inclusions with $N_e=4$ for different noise level
    $\delta=0$, $2\%$ and $10\%$.
  }
\end{figure}

\begin{figure}[h!]
  \centering
  \subfigure[reference]{
    \includegraphics[width=0.22\textwidth]{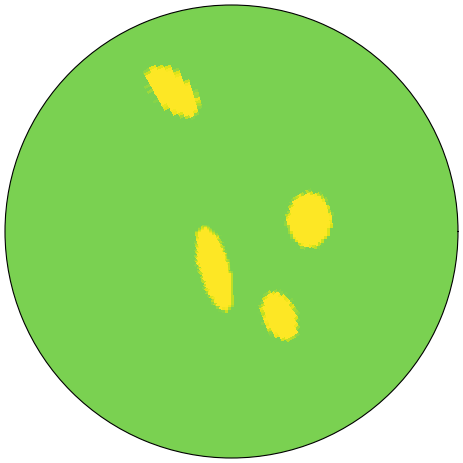}
  }
  \subfigure[$\tm^{\NN}$, $\delta=0\%$]{
    \includegraphics[width=0.22\textwidth]{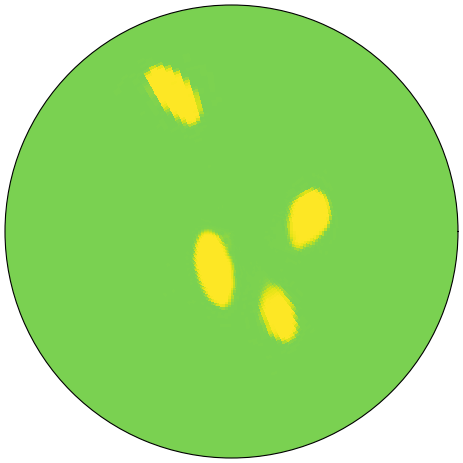}
  }
  \subfigure[$\tm^{\NN}$, $\delta=1\%$]{
    \includegraphics[width=0.22\textwidth]{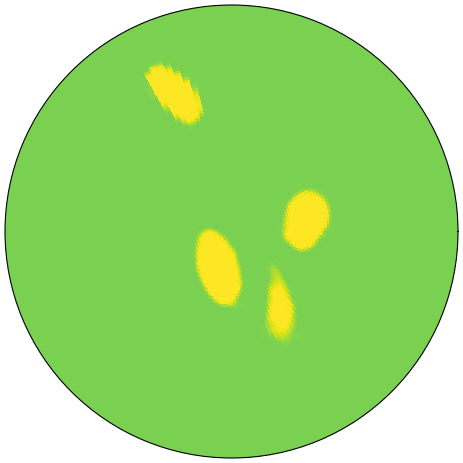}
  }
  \subfigure[$\tm^{\NN}$, $\delta=2\%$]{
    \includegraphics[width=0.22\textwidth]{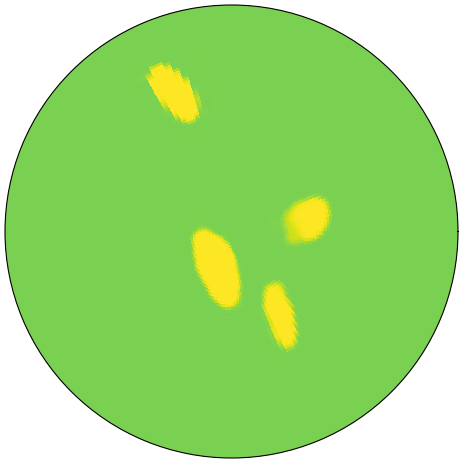}
  }

  \subfigure[reference]{
    \includegraphics[width=0.22\textwidth]{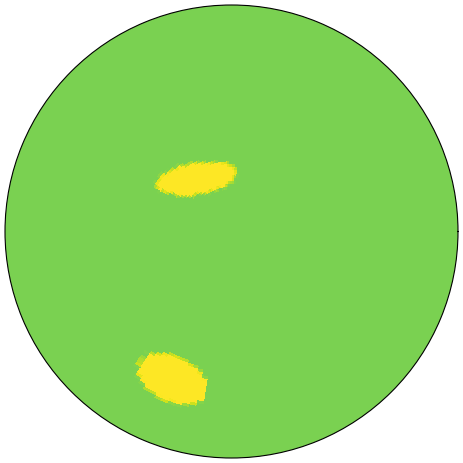}
  }
  \subfigure[$\tm^{\NN}$, $\delta=0\%$]{
    \includegraphics[width=0.22\textwidth]{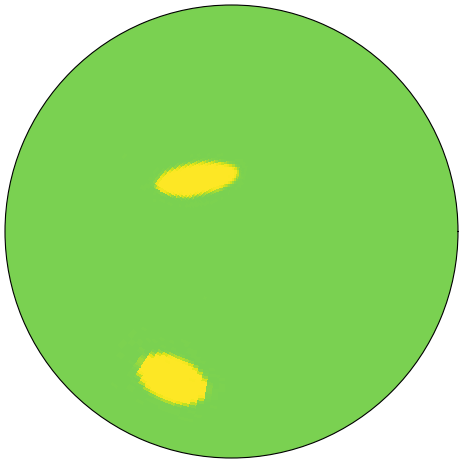}
  }
  \subfigure[$\tm^{\NN}$, $\delta=1\%$]{
    \includegraphics[width=0.22\textwidth]{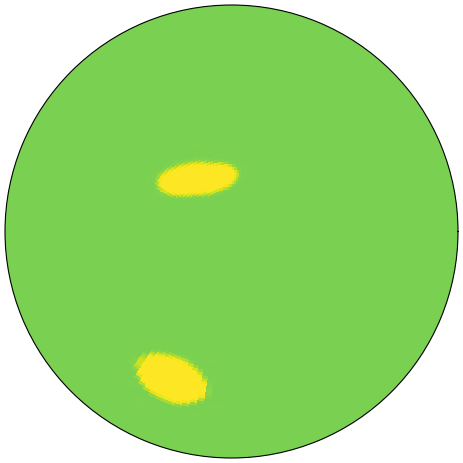}
  }
  \subfigure[$\tm^{\NN}$, $\delta=2\%$]{
    \includegraphics[width=0.22\textwidth]{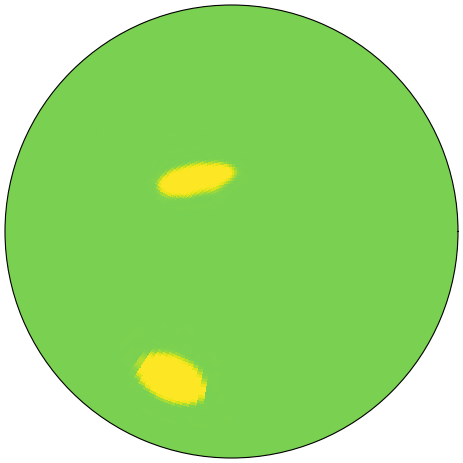}
  }
  \caption{\label{fig:generalizationPosN2N4} NN generalization test for the negative inclusions.
  The upper (or lower) figures: the NN is trained by the data of the number of ellipses $N_e=2$ (or
  $4$) with noise level $\delta=0$, $1\%$ or $2\%$ and test by the data of $N_e=4$ (or $2$) with
  the same noise level. 
      }
\end{figure}

\begin{figure}[h!]
  \centering
  \begin{tabular}{l@{}cccc}
    & reference & Neg & Pos & Mix \\
    \rotatebox{90}{\phantom{abcdefgh}Neg}\phantom{ab} &
    \includegraphics[width=0.22\textwidth]{TTPosPosN4N4Ns0Ns0Sp7Y.png} &
    \includegraphics[width=0.22\textwidth]{TTPosPosN4N4Ns0Ns0Sp7Ypred.png} & 
    \includegraphics[width=0.22\textwidth]{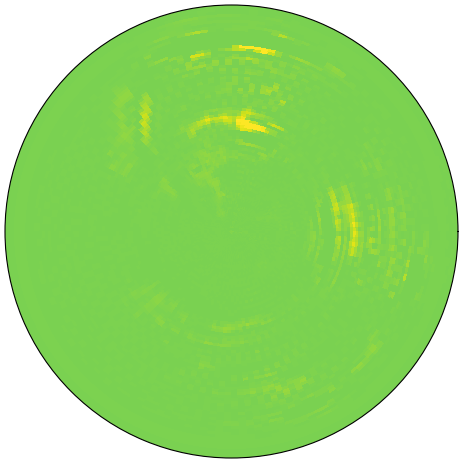} & 
    \includegraphics[width=0.22\textwidth]{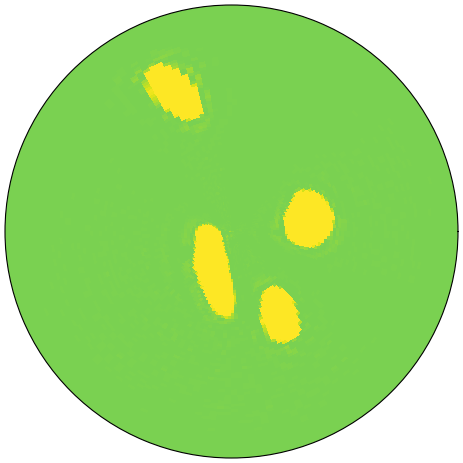} \\
    \rotatebox{90}{\phantom{abcdefgh}Pos} &
    \includegraphics[width=0.22\textwidth]{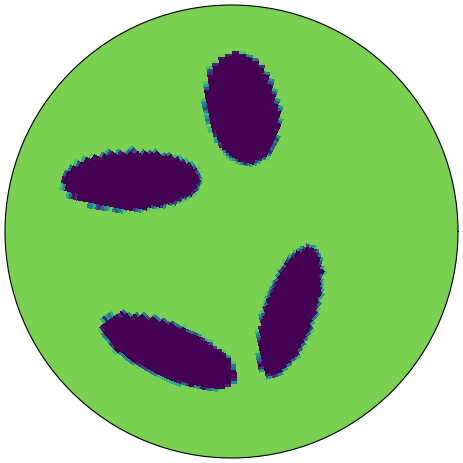} & 
    \includegraphics[width=0.22\textwidth]{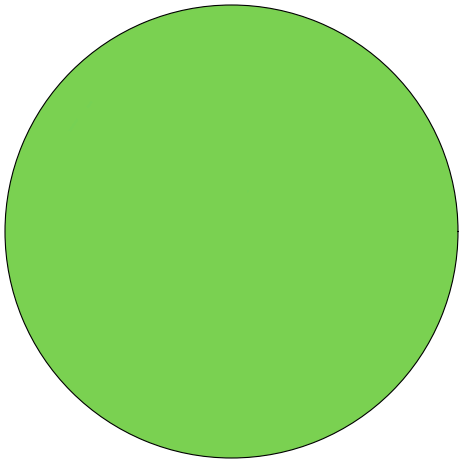} & 
    \includegraphics[width=0.22\textwidth]{TTNegNegN4N4Ns0Ns0Sp2Ypred.png} & 
    \includegraphics[width=0.22\textwidth]{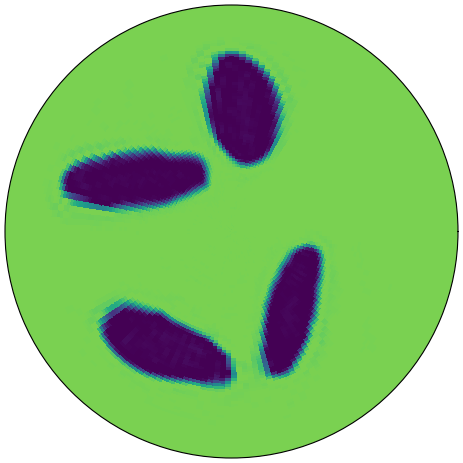} \\
    \rotatebox{90}{\phantom{abcdefgh}Mix} &
    \includegraphics[width=0.22\textwidth]{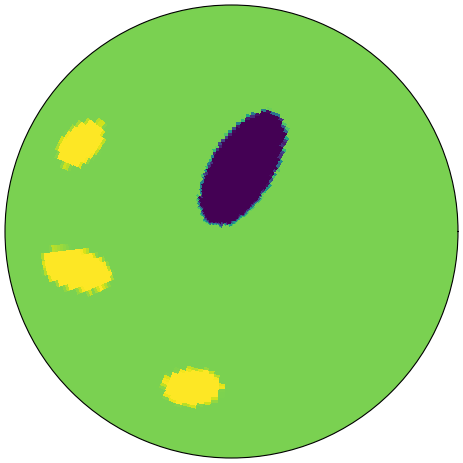} & 
    \includegraphics[width=0.22\textwidth]{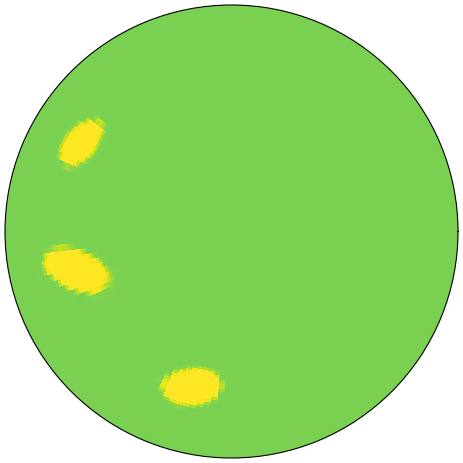} & 
    \includegraphics[width=0.22\textwidth]{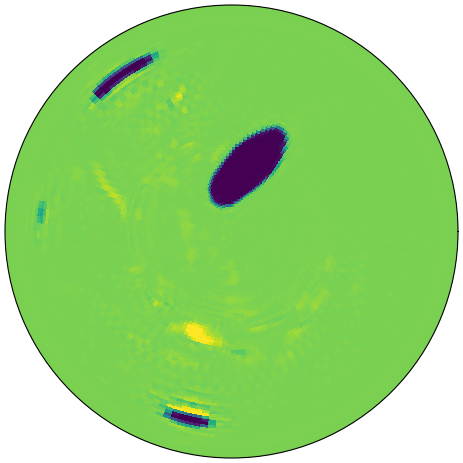} & 
    \includegraphics[width=0.22\textwidth]{TTMixMixN4N4Ns0Ns0Sp1Ypred.png}
  \end{tabular}
  \caption{\label{fig:generalization} NN generalization test for different types of data sets.
  The first column is the reference solution. In each column of the last three columns, 
  the NN is trained with one data type (negative, positive, or mixed) and is tested on all three 
  data types with $N_e=4$ and without noise.
                      }
\end{figure}

The first numerical study is concerned with the choice of channel number $c$ in
\cref{alg:inverse}. \Cref{fig:trErr} presents the test PSNR and the number of parameters with
different channel number $c$ for three types of data sets with $N_e=4$. As the channel number $c$
increases, the test PSNR first consistently increases and then saturates for all the three types of
data. Notice that the number of parameters of the neural network is
$O(c^2\log(N_{\theta})N_{\cnn})$. The choice of $c=30$ is a reasonable balance between accuracy and
efficiency and the total number of parameters is $684$K.

To model the uncertainty in the measurement data, we introduce noises to the measurement data by
defining $u^{s,\delta}(x_r)\equiv (1+ Z_i \delta)u^s(x_r)$, where $Z_i$ is a Gaussian random
variable with zero mean and unit variance and $\delta$ controls the signal-to-noise ratio. In terms
of the actual data $d$ of the differential imaging, $d^\delta(s,h) \equiv (1+Z_i\delta)d(s,h) +
Z_i\delta u^s_0(x_r)$. Notice that, since the mean of
$\frac{\|u^s_0(x_r)\|}{\|d(s,h)\|}$ for all the samples lies in $[15, 30]$ in these experiments, the
signal-to-noise ratio for $d$ is in fact more than $15\cdot\delta$. For each noisy level $\delta=0$,
$2\%$, $10\%$, an independent NN is trained and tested with the noisy data set
$\{(d_i^\delta,\tm_i)\}$.

\Cref{fig:prediction} collects, for different noise level $\delta$, samples for all three data
types: (1) negative inclusions with $N_e=4$, (2) positive inclusions with $N_e=4$, and (3) mixture
inclusions with $N_e=4$. The NN is trained with the datasets generated in the same way as the test
data. When there is no noise in the measurement data, the NN consistently gives accurate predictions
of the slowness field $\tm$, in the position, shape, and direction of the ellipse. For the small
noise levels, for example, $\delta=2\%$, the boundary of the shapes slightly blurs while the
position and direction of the ellipse are still correct. As the noise level $\delta$ increases, the
shapes become fuzzy but the position and number of shapes are always correct. This demonstrates the
proposed NN architecture is capable of learning the inverse problem.

The next test is about the generalization of the proposed NN.  We first train the NN by the data set
of the negative inclusions with $N_e=2$ (or $4$) with noise level $\delta=0$, $1\%$ or $2\%$ and
test by the data of the negative inclusions with $N_e=4$ (or $2$) with the same noise level. The
results, presented in \cref{fig:generalizationPosN2N4}, indicate that the NN trained by the data
with two inclusions is capable of recovering the measurement data of the case with four inclusions,
and vice versa.  This shows that the trained NN is capable of predicting beyond the training
scenario.

The last test is about the prediction power of the NN on one data type while trained with
another. In \cref{fig:generalization}, the first column is the reference solution. In each of the
rest three columns, the NN is trained with one data type (negative, positive, or mixed) and is
tested on all three data types, with $N_e=4$ and without noise. The figures in the second column
show that the NN trained by negative inclusions fails to capture the information of the positive
inclusions, and vice versa, the third column demonstrates that the NN trained with positive
inclusions fails for the negative inclusions.  On the other hand, the NN trained with mixed
inclusions is capable of predicting reasonably well for all three data types.

\section{Discussions}\label{sec:conclusion}
This paper presents a neural network approach for the inverse problems of first arrival traveltime
tomography, by using the NN to approximate the whole inverse map from the measurement data to the
slowness field. The perturbative analysis, which indicates that the linearized forward map can be
represented by a one-dimensional convolution with multiple channels, inspires the design of the
whole NN architectures. The analysis in this paper can also be extended to the three-dimensional TT
problems by leveraging recent work such as \citep{s.2018spherical}.

\acks{
  The work of Y.F. and L.Y. is partially supported by the U.S. Department of Energy, Office of
  Science, Office of Advanced Scientific Computing Research, Scientific Discovery through Advanced
  Computing (SciDAC) program. The work of L.Y. is also partially supported by the National Science
  Foundation under award DMS-1818449.
}

\bibliography{nn}

\appendix

\end{document}